\patchcmd{\thebibliography}{\list}{\printbibepigraph\list}{}{}
\newcommand{\bibepigraph}[1]{%
  \def\printbibepigraph{\begin{flushright}#1\end{flushright}}%
}
\def\printbibepigraph{} 
\numberwithin{equation}{section}
\newtheorem{thm}{Theorem}
\newtheorem*{thm*}{Theorem}
\newtheorem*{prop*}{Proposition}
\newtheorem{cor}[thm]{Corollary}
\newtheorem*{cor*}{Corollary}
\newtheorem*{iproblem*}{Problem}
\theoremstyle{definition}
\newtheorem*{defi*}{Definition}
\newtheorem*{rem*}{Remark}
\newtheorem*{warn*}{Warning}
\newtheorem*{com*}{Comment}
\newcommand{\sA}{\mathscr{A}}
\newcommand{\RR}{\mathbf{R}}
\newcommand{\CC}{\mathbf{C}}
\newcommand{\Isom}{{\rm Isom}}
\newcommand{\Aut}{{\rm Aut}}
\newcommand{\inv}{^{-1}}
\newcommand{\ro}{\varrho}
\newcommand{\fhi}{\varphi}
\newcommand{\epsi}{\epsilon}
\newcommand{\cato}{{\upshape CAT(0)}\xspace}
\newcommand{\one}{\boldsymbol{1}}
\newcommand{\bnda}{\Delta}
\newcommand{\bnd}{\partial}
\newcommand{\prob}{\mathscr{P}} 
\newcommand{\ru}{\mathrm{C}^\mathrm{b}_\mathrm{ru}}
\newcommand{\cont}{\mathrm{C}}
\newcommand{\meas}{\mathscr{M}}
\newcommand{\measb}{\meas^\mathrm{b}}
\newcommand{\bo}{P}
\newcommand{\Mod}{\nabla}
\title[Gelfand pairs admit an Iwasawa decomposition]{Gelfand pairs admit an Iwasawa decomposition}
\author[Nicolas Monod]{Nicolas Monod}
\address{EPFL, Switzerland}
\begin{document}
\begin{abstract}
Every Gelfand pair $(G,K)$ admits a decomposition $G=K\bo$, where $\bo<G$ is an amenable subgroup. In particular, the Furstenberg boundary of $G$ is homogeneous.

Applications include the complete classification of non-positively curved Gelfand pairs, relying on earlier joint work with Caprace, as well as a canonical family of pure spherical functions in the sense of Gelfand--Godement for general Gelfand pairs.
%
\end{abstract}
\maketitle

\thispagestyle{empty}
%

Let $G$ be a locally compact group. The space $\measb(G)$ of bounded 
measures on $G$ is an algebra for convolution, which is simply the push-forward of the multiplication map $G\times G \to G$.

\begin{defi*}
Let $K<G$ be a compact subgroup. The pair $(G,K)$ is a \textbf{Gelfand pair} if the algebra $\measb(G)^{K,K}$ of bi-$K$-invariant measures is commutative.
\end{defi*}

This definition, rooted in Gelfand's 1950 work~\cite{Gelfand50_short}, is often given in terms of algebras of \emph{functions}~\cite{Faraut83}
. This is equivalent, by an approximation argument in the narrow topology
, but has the inelegance of requiring the choice (and existence) of a Haar measure on $G$.

Examples of Gelfand pairs include notably all connected semi-simple Lie groups $G$ with finite center, where $K$ is a maximal compact subgroup. Other examples are provided by their analogues over local fields~\cite{Gross91}, and non-linear examples include automorphism groups of trees~\cite{Olshanskii77_en},\cite{Amann_PhD}.

All these ``classical'' examples also have in common another very useful property: they admit a \emph{co-compact amenable subgroup} $P<G$. In the semi-simple case, $P$ is a minimal parabolic subgroup. Moreover, the Iwasawa decomposition implies that $G$ can be written as $G=KP$. This note shows that this situation is not a coincidence:

\begin{thm*}
Let $(G,K)$ be a Gelfand pair. Then $G$ admits a co-compact amenable subgroup $\bo < G$ such that $G=K\bo$.
\end{thm*}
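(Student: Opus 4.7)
The plan is to realize $\bo$ as the stabilizer of a point in the (topological) Furstenberg boundary $B := \partial_F G$, i.e.\ the universal compact minimal strongly proximal $G$-space. The crux is to show that $K$ already acts transitively on $B$; once this is established, picking any $b \in B$ gives $B = Kb$, and setting $\bo := \Stab_G(b)$ yields at once $G = K\bo$. The quotient $G/\bo \cong B$ is compact, so $\bo$ is co-compact, and because $\bo$ fixes the point $b$ it admits the invariant probability $\delta_b$ on $\partial_F G$, whence the Furstenberg characterisation of amenable closed subgroups of $G$ as those carrying an invariant measure on $\partial_F G$ forces $\bo$ to be amenable.

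To establish $K$-transitivity I would work on the non-empty compact convex set $\prob(B)^K \subset \prob(B)$ of $K$-invariant Borel probabilities. For each $g \in G$ define the continuous affine operator $T_g \mu := (m_K \delta_g) * \mu$, where $m_K$ is the Haar probability on $K$ viewed in $\measb(G)$; note $T_g$ sends $\prob(B)^K$ into itself. Using the identity $m_K * \mu = \mu$ (which is precisely $K$-invariance of $\mu$) together with associativity of convolution rewrites
\[ T_{g_1} T_{g_2} \mu \;=\; (m_K \delta_{g_1} m_K) * (m_K \delta_{g_2} m_K) * \mu , \]
whose two middle factors lie in the bi-$K$-invariant algebra $\measb(G)^{K,K}$. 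The Gelfand-pair hypothesis therefore makes $\{T_g\}_{g \in G}$ a commuting family, and Markov--Kakutani produces a common fixed point $\mu^{\ast} \in \prob(B)^K$. Strong proximality of $B$ then supplies a net $g_n$ with $g_n \mu^{\ast} \to \delta_b$ for some $b \in B$; since $T_{g_n}\mu^{\ast} = \mu^{\ast}$ and $T_{g_n}\mu^{\ast}$ converges to $\int_K \delta_{kb}\, dk$ by continuity of $K$-averaging, one identifies $\mu^{\ast}$ as the $K$-orbit measure supported exactly on the closed set $Kb$. Applying the fixed-point identity $T_g \mu^{\ast} = \mu^{\ast}$ for an arbitrary $g \in G$ and comparing supports forces $KgKb = Kb$, and hence $Gb \subseteq Kb$; minimality of $B$ combined with closedness of $Kb$ then promotes this to $B = Kb$.

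The step I expect to be the most delicate is this support comparison, where commutativity of the Hecke algebra (via the Markov--Kakutani fixed point) and strong proximality have to combine to pin $\mu^{\ast}$ down as a single $K$-orbit measure rather than some more elaborate $K$-invariant convex combination. Secondary technical points that warrant care are the existence and universal properties of $\partial_F G$ in full locally compact generality, the verification that the orbit map $G/\bo \to B$ is a $G$-homeomorphism (so that co-compactness of $\bo$ really follows from compactness of $B$), and the correct invocation of the Furstenberg characterisation of amenability of closed subgroups in terms of invariant measures on the universal boundary.
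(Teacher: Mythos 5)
Your argument for the transitivity of $K$ on the Furstenberg boundary is correct and is essentially the paper's own: the operators $T_g$ are exactly the action of the bi-$K$-invariant probability measures $m_K\delta_g m_K$ on the $K$-fixed part of the affine flow $\prob(\bnd G)$, commutativity comes from the Gelfand hypothesis, and Markov--Kakutani plus minimality/strong proximality pins the fixed point down to a single $K$-orbit measure. (The paper phrases this via the universal \emph{irreducible} affine flow and density of $\prob(G)p$ rather than via your support comparison, but these are equivalent ways of exploiting minimality and strong proximality.) Your ``most delicate step'' is fine as written.

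The genuine gap is the last step, which you dismiss as a secondary technical point: there is no ``Furstenberg characterisation'' of amenability of closed subgroups of a general locally compact group by invariant measures on $\bnd G$. What the fixed point $\delta_b$ gives you is only that $\bo=\Stab_G(b)$ is \emph{relatively amenable} in $G$ (it fixes a point in every affine $G$-flow); whether relative amenability implies amenability for closed subgroups of an arbitrary locally compact group is an open problem of Caprace--Monod, settled only for special classes of ambient groups (e.g.\ exact ones --- and exactness of $G$ is here a \emph{consequence} of the theorem, so cannot be assumed). The paper explicitly warns that a subgroup fixing a point of $\bnd G$ need not be amenable, and closes the gap with a separate Proposition that crucially uses the \emph{co-compactness} of $\bo$ just established: a $\bo$-invariant mean on the right uniformly continuous bounded functions on $G$ is shown to descend to $\bo$ itself, by dominating any function vanishing on $\bo$ with an Urysohn function on $G/\bo$ and observing that, by strong proximality, the only $\bo$-invariant probability measure on $G/\bo\cong\bnd G$ is $\delta_{e\bo}$. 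Without this (or an equivalent) argument, your proof establishes $G=K\bo$ with $\bo$ co-compact and relatively amenable, but not amenable.
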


The mere existence of $\bo$ has a number of strong consequences discussed below. Most immediate is that $G$ belongs to the exclusive club whose members boast a homogeneous Furstenberg boundary:

\begin{cor}\label{cor:max}
Choose a \emph{maximal} subgroup $\bo < G$ as in the Theorem.

Then the Furstenberg boundary of $G$ is the homogeneous space $\bnd G = G/\bo \cong K/(K\cap\bo)$.

In particular, $\bo$ is unique up to conjugacy.
\end{cor}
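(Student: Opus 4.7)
My plan is to identify $\bo$ with a point stabilizer in the Furstenberg boundary $\bnd G$. The homeomorphism $G/\bo \cong K/(K\cap\bo)$ falls out as a byproduct of the decomposition $G = K\bo$: the $K$-equivariant map $k(K\cap\bo)\mapsto k\bo$ is a continuous bijection from a compact space, hence a homeomorphism, and simultaneously exhibits $G/\bo$ as compact.

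The substantive step is to produce a $\bo$-fixed point in $\bnd G$. I would begin with a $\bo$-invariant probability measure $\mu\in\prob(\bnd G)$, whose existence follows from amenability of $\bo$ and compactness of $\bnd G$. Strong proximality of the $G$-action then supplies a net $g_i\in G$ with $g_i\mu\to \delta_y$ for some $y\in\bnd G$. The Iwasawa-type decomposition now enters the game: writing $g_i = k_i p_i$ with $k_i\in K$ and $p_i\in\bo$, and extracting a convergent subnet $k_i\to k_\infty$ via compactness of $K$, the $\bo$-invariance of $\mu$ collapses $g_i\mu = k_i\mu$, which then converges to $k_\infty\mu$. Therefore $\mu = k_\infty\inv\delta_y$ is itself a Dirac mass, and its atom $x_0$ is fixed by $\bo$.

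Given $x_0$, the orbit map $g\bo\mapsto gx_0$ is a continuous $G$-equivariant surjection $G/\bo\to\bnd G$ (surjective by minimality of $\bnd G$, since the image is closed, nonempty and $G$-invariant). It factors through $G/Q$ where $Q:=\Stab_G(x_0)\supseteq \bo$. I expect $Q$ to satisfy the Theorem's hypotheses: cocompact because $G/Q$ is a continuous image of the compact $G/\bo$, satisfying $G = K\bo\subseteq KQ$, and amenable because the $G$-action on $\bnd G$ is topologically amenable---this topological amenability of the Furstenberg action is the principal external input of the argument. Maximality of $\bo$ then forces $\bo = Q$, whereupon the map $G/\bo\to \bnd G$ is a continuous bijection of compact Hausdorff $G$-spaces and hence a $G$-equivariant homeomorphism.

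The uniqueness clause follows formally: for another maximal $\bo'$ satisfying the Theorem, the preceding gives $G/\bo\cong\bnd G\cong G/\bo'$ as $G$-spaces, and tracking the base coset $\bo$ through this chain furnishes $g_0\in G$ with $\bo\subseteq g_0\bo' g_0\inv$; since $g_0\bo' g_0\inv$ is itself a maximal subgroup as in the Theorem, it must equal $\bo$. I expect the only genuine obstacle to be confirming that $Q$ is amenable via the topological amenability of $\bnd G$; the rest of the argument is powered entirely by the $K\bo$-decomposition supplied by the Theorem.
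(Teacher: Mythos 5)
Your route is genuinely different from the paper's, and most of it is sound. The paper obtains homogeneity of $\bnd G$ upstream: the Gelfand property plus Markov--Kakutani gives a \emph{unique} $K$-fixed measure in the universal flow $\bnda G=\prob(\bnd G)$, and since every $K$-orbit carries an invariant measure, $K$ must act transitively on $\bnd G$; the group $\bo$ is then \emph{defined} as a point stabiliser, and maximality and uniqueness are read off from the universal property of $\bnda G$. You instead start from the Theorem's conclusion and manufacture a $\bo$-fixed point: take a $\bo$-invariant $\mu$, push it to a Dirac mass by strong proximality along $g_i=k_ip_i$, and use $g_i\mu=k_i\mu\to k_\infty\mu$ to see that $\mu$ was already a Dirac mass. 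That step is correct and is a pleasant use of the decomposition $G=K\bo$; the homeomorphism $G/\bo\cong K/(K\cap\bo)$ and the conjugacy statement are also fine.

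The gap is exactly where you suspect it: amenability of $Q=\Stab_G(x_0)$. ``The $G$-action on $\bnd G$ is topologically amenable'' is not an off-the-shelf fact: for discrete groups it characterises \emph{exact} groups and fails for non-exact ones, and the paper explicitly warns that a point stabiliser in $\bnd G$ need not be amenable in general. In the present setting exactness does follow from the Theorem, but deducing from it the amenability of the Furstenberg action of a locally compact group is itself a nontrivial theorem --- far heavier than needed, and the paper deliberately arranges matters so that exactness is only an \emph{a posteriori} corollary. The repair is cheap and stays inside your own argument: your continuous bijection $G/Q\to\bnd G$ (compact source, Hausdorff target) already gives $\bnd G\cong G/Q$ \emph{before} you know $Q=\bo$; moreover $Q$ fixes $\delta_{x_0}\in\bnda G$, hence is relatively amenable, and it is co-compact. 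The paper's Proposition (relative amenability plus co-compactness, with strong proximality forcing the pushed-forward mean on $G/Q$ to be $\delta_{x_0}$) then yields amenability of $Q$ by an elementary computation with right uniformly continuous functions. Maximality gives $Q=\bo$, and the rest of your proof goes through unchanged.
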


\noindent
Another general consequence is that $G$ is \emph{exact} in the sense of C*-algebras~\cite[\S7.1]{Kirchberg-Wassermann99}.

\begin{rem*}
The proof of the Theorem is easy. What surprises us (besides the fact that it went unnoticed during decades of harmonic analysis on Gelfand pairs) is that the unique group $\bo$ of Corollary~\ref{cor:max} is obtained by purely existential methods. Indeed, the author is unaware of a constructive proof --- \emph{or even of a heuristic based on the classical Iwasawa decomposition}, explaining $(G,K)\mapsto \bo$.
\end{rem*}

We next derive a more geometric illustration of how consequential the existence of $\bo$ is. The above classical examples of Gelfand pairs are all \textbf{\cato groups} in the sense that they occur as cocompact isometry groups of non-positively curved spaces: either \emph{Riemannian symmetric spaces} or \emph{Euclidean buildings}. General \cato groups constitute a much more cosmopolitan category populated by all sorts of exotic spaces hailing from combinatorial group theory, Kac--Moody theory, etc. Using the ``indiscrete Bieberbach theorem'' established with P.-E.~Caprace~\cite{Caprace-Monod_bib}, the Theorem of this note leads to a complete classification of \cato Gelfand pairs:

\begin{cor}\label{cor:NPC}
Let $(G,K)$ be a Gelfand pair and assume that $G<\Isom(X)$ acts co-compactly on a geodesically complete locally compact \cato space $X$.

Then $X$ is a product of Euclidean spaces, Riemannian symmetric spaces of non-compact type, Bruhat--Tits buildings and biregular trees.

In particular, $G$ lies in a product of Gelfand pairs belonging to the classical sets of examples above.
\end{cor}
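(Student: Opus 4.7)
The plan is to extract from the main Theorem a cocompact amenable subgroup $\bo<G$ and to feed this into the structure and classification theorems for locally compact \cato groups developed jointly with Caprace in~\cite{Caprace-Monod_bib}.

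\emph{Step 1 (amenable cocompact action on $X$).} By the Theorem there is an amenable $\bo<G$ with $G=K\bo$. Since $K$ is compact, $\bo$ is cocompact in $G$ and hence acts cocompactly on $X$. Passing to its closure in $\Isom(X)$ if needed, we obtain a closed amenable subgroup of $\Isom(X)$ acting cocompactly on the geodesically complete locally compact \cato space $X$.

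\emph{Step 2 (splitting of $X$ and elimination of exotic factors).} The Caprace--Monod structure theorem canonically decomposes $X$, after passing to a canonical convex subspace and a finite-index open subgroup of $G$, as
\[
X \cong \RR^n \times S \times B \times Y,
\]
where $S$ is a Riemannian symmetric space of non-compact type, $B$ is a product of irreducible Bruhat--Tits buildings and biregular trees, and $Y$ is a product of ``exotic'' irreducible \cato factors; the isometry group of $X$ splits compatibly up to finite index. The indiscrete Bieberbach theorem of~\cite{Caprace-Monod_bib} asserts that an exotic irreducible factor admits no cocompact amenable closed isometry subgroup. Applying this to the projection of $\bo$ to each factor of $\Isom(X)$ --- which remains amenable, closed and cocompact --- forces $Y$ to be trivial, yielding the claimed decomposition.

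\emph{Step 3 (factorwise Gelfand pairs).} Projecting $G$ onto the isometry group of each irreducible factor $X_i$ and closing gives a locally compact group $G_i$ in which the image $K_i$ of $K$ is a compact subgroup. The bi-$K$-invariant measure algebra $\measb(G)^{K,K}$ surjects onto $\measb(G_i)^{K_i,K_i}$, so commutativity is inherited and $(G_i,K_i)$ is again a Gelfand pair. By Step~2 each $X_i$ is of classical type, so $(G_i,K_i)$ belongs to one of the classical families listed in the introduction, and $G$ embeds into their product.

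The main obstacle is the precise application of~\cite{Caprace-Monod_bib} in Step~2: one must check that the structure theorem and the indiscrete Bieberbach theorem apply with the sole hypothesis that $\bo$ is a closed amenable cocompact subgroup of $\Isom(X)$, and that ``biregular trees'' --- which are not always Bruhat--Tits --- are correctly singled out as the rank-one piece. Everything else is formal: the canonical splitting of $\Isom(X)$ over the factors of $X$ together with the elementary inheritance of the Gelfand property by quotient pairs.
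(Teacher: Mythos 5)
Your overall strategy --- extract the cocompact amenable $\bo$ from the Theorem and feed it into the Caprace--Monod machinery --- is the paper's strategy too, but your Step~2 has a genuine gap. The indiscrete Bieberbach theorem \cite[Thm.~B]{Caprace-Monod_bib} is not a statement that ``exotic irreducible factors admit no cocompact amenable closed isometry subgroup''; it \emph{is} the classification statement, and its hypotheses go beyond the mere existence of a cocompact amenable subgroup: one works with $X$ minimal, with the Euclidean factor split off, and --- crucially --- with \emph{no fixed point at infinity}. Likewise, the structure theory of \cite{Caprace-Monod_structure} does not pre-sort the irreducible factors of $X$ into ``classical'' and ``exotic'' pieces; it only separates the Euclidean and symmetric-space parts from irreducible factors with totally disconnected isometry groups, and deciding which of the latter are Bruhat--Tits buildings or biregular trees is precisely the content of Theorem~B. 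So the decomposition $X\cong\RR^n\times S\times B\times Y$ you start from, with $B$ already identified as buildings and trees and $Y$ as a residual exotic part killed by a nonexistence statement, attributes to the references a form they do not have.

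The missing step is the verification that $G$ has no fixed point at infinity, and this is where the Gelfand-pair hypothesis enters a \emph{second} time, beyond supplying $\bo$: in any Gelfand pair $G$ is unimodular, and after passing to a minimal invariant subspace and splitting off the Euclidean factor, Theorem~M of \cite{Caprace-Monod_amenis_MA} applies to the closed unimodular cocompact $G<\Isom(X)$ and yields the absence of fixed points at infinity. Only then is \cite[Thm.~B]{Caprace-Monod_bib} applicable, and it directly outputs the full product decomposition --- no factor-by-factor elimination is needed. (Your Step~3 addresses the ``in particular'' clause, which the paper leaves implicit; note however that the claimed surjectivity of $\measb(G)^{K,K}\to\measb(G_i)^{K_i,K_i}$ is doubtful when the image of $G$ in the factor's isometry group is not closed, and it is safer to argue that the image of the bi-invariant algebra is a dense commutative subalgebra in a suitable topology.)
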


\noindent
This statement contains for instance a result by Caprace--Ciobotaru~\cite{Caprace-Ciobotaru}, namely: let $X$ be an irreducible locally finite thick Euclidean building. If $G=\Aut(X)$ (or any co-compact subgroup $G<\Isom(X)$) is a Gelfand pair for some compact $K<G$, then $X$ is Bruhat--Tits.

Similarly, the statement contains some cases of results by Abramenko--Parkinson--Van Maldeghem~\cite{Abramenko-Parkinson-VanMaldeghem} and L\'ecureux~\cite[\S7]{Lecureux_PhD},\cite{Lecureux10} establishing the non-commutativity of Hecke algebras associated to certain Coxeter groups. Namely, when Kac--Moody theory associates to them a locally finite thick building, Corollary~\ref{cor:NPC} implies that the Hecke algebra can only be commutative in the affine case.

\medskip

The ``Iwasawa decomposition'' $G=K\bo$ is stronger yet than the existence of $\bo$. For instance, it is a key ingredient for results of Furman~\cite[Thm.~10]{Furman03} and it could shed some light on the \emph{spherical dual} of $G$, see below. It should also impose further restrictions on the centraliser lattice in case $G$ is a compactly generated simple group, see~\cite{Caprace-Reid-Willis_2017_II}. Already the existence of $\bo$ implies that this lattice is at most countable: see~\cite[pp.~11--12]{Caprace-Reid-Willis_2017_II} and use that $G/\bo$ is metrisable in this setting.

\medskip
We now contemplate some of the analytic legacy that the decomposition $G=K\bo$ bestows upon a general Gelfand pair $(G,K)$. Following Gelfand and Godement~\cite{Godement57}, the fundamental building block of non-commutative Fourier--Plancherel theory is given by positive definite \textbf{spherical functions} on Gelfand pairs, namely continuous $\fhi\colon G\to \CC$ satisfying
\begin{equation*}
\fhi(x)\, \fhi(y) = \int_K  \fhi(x k y) \, d k \kern5mm\forall x,y\in G
\end{equation*}
where the integration is with respect to the unique Haar probability measure on $K$; see also~\cite{Dieudonne79} and~\cite{Wolf07_short}. This is the abstract generalisation of addition formulas for special functions such as Legendre functions~\cite{Vilenkin_short}.

Here is how $\bo$ enters the picture:

Let $\Mod_\bo$ be the modular function of $\bo$, which is non-trivial unless $G$ itself is amenable and $G=\bo$. Then $\ro(kp) = \Mod_\bo(p)$ gives a well-defined continuous function $\ro\colon  G\to\RR_{>0}$ when $k\in K$, $p\in \bo$ because $\Mod_\bo$ vanishes on $K\cap\bo$. For every parameter $s\in \CC$, define
\begin{equation*}
\fhi_s(g) = \int_K \ro(g\inv k)^{\frac12 + i s} \, d k.
\end{equation*}
In view of Corollary~\ref{cor:max}, \emph{$\fhi_s$ is actually canonically attached to the pair $(G,K)$} up to conjugation. On the other hand, $\fhi_s$ is the matrix coefficient of the (projectively) unique $K$-fixed vector in a parabolically induced representation from $P$. In particular, $\fhi_s$ is a pure positive definite spherical function on $G$ for each real $s$.

This is classical for semi-simple groups, where $\fhi_s$ above is the \textbf{Harish-Chandra formula}; the Theorem makes it available for general Gelfand pairs, as desired by Godement~\cite[\S16]{Godement52}. Of course this only gives a principal series and suggests to investigate fully the characters of $P$.


\medskip
\emph{Proof of the Theorem and of Corollary~\ref{cor:max}.}
We recall that an \textbf{affine $G$-flow} is a non-empty compact convex set $C$ in some locally convex topological vector space over $\RR$, endowed with a jointly continuous $G$-action preserving the affine structure of $C$. An affine flow is called \textbf{irreducible} if it does not contain any proper affine subflow. An argument due to Furstenberg implies that $G$ admits an irreducible flow $\bnda G$ which is \textbf{universal} in the sense that it maps onto every irreducible flow. Moreover, $\bnda G$ is unique up to unique isomorphisms. It turns out that $\bnda G$ is the simplex of probability measures $\prob(\bnd G)$ over the Furstenberg boundary $\bnd G$ of $G$, and that this is actually one of the possible \emph{definitions} of $\bnd G$. For all this, we refer to~\cite{Glasner_LNM_short}.

We shall be more interested in the convex subset $\prob(G)$ of $\measb(G)$ consisting of the probability measures, as well as in the corresponding subset $\prob(G)^{K,K}$. We note the following straightforward facts:

\begin{itemize}
\item $\prob(G)$ is closed under the multiplication given by convolution.
\item The monoid $\prob(G)$ contains $G$ via the identification of points with Dirac masses.
\item The normalised Haar measure $\kappa$ of $K$ is an idempotent belonging to $\prob(G)^{K,K}$.
\item $\prob(G)^{K,K} = \kappa \prob(G) \kappa$; it is a monoid with $\kappa$ as identity.
\end{itemize}

By generalised vector-valued integration~\cite[IV\S7.1]{BourbakiINT14}, any affine $G$-flow $C$ is endowed with an action of the monoid $\prob(G)$ which is affine in both variables. It will be crucial below that this action is moreover continuous for the variable in $C$. One way to see this is to check first that any $\mu\in \prob(G)$ induces a continuous map $C\to \prob(C)$ by push-forward on orbits, using that the $G$-action on $C$ is equicontinuous over compact subsets of $G$. Then observe that the action of $\mu$ is obtained by composing this  map $C\to \prob(C)$ with the continuous barycenter map $\prob(C)\to C$.

Since $K$ is compact, it has a non-empty fixed-point set $C^K$; better yet, the idempotent $\kappa$ provides a continuous projection $\kappa\colon C\to C^K$. In particular, the monoid $\sA=\kappa \prob(G) \kappa$ preserves the convex compact set $C^K$.

Only now do we use the assumption that we have a Gelfand pair: the monoid $\sA$ is commutative. Since $\sA$ acts by continuous operators, the Markov--Kakutani theorem therefore implies that $\sA$ fixes a point $p$ in $C^K$. From now on, we assume that $C$ is irreducible. The convex set $\prob(G)p$ is $G$-invariant and hence must be dense. It follows that $\kappa\prob(G)p$ is dense in $C^K$, but $\kappa\prob(G)p$ is $\sA p$ which is reduced to $p$. In conclusion, we have shown that $K$ has a unique fixed point in $C$.

We now apply this to the case where $C=\bnda G$ is the simplex of probability measures on $\bnd G$ and deduce that $K$ fixes a unique such measure on $\bnd G$. Since $K$ is compact, every $K$-orbit supports an invariant measure: the push-forward of $\kappa$. This implies that $K$ has a single orbit in $\bnd G$. In particular, $\bnd G= G/\bo$ for some co-compact subgroup $\bo < G$ and moreover $G= K\bo$.

Next, we observe that $\bo$ is \textbf{relatively amenable} in $G$, which means by definition that every affine $G$-flow has a $\bo$-fixed point. Indeed, this property characterises the subgroups that fix a point in $\bnda G$: this follows from the universal property of $\bnda G$. This characterisation also implies that this $\bo$ is already \emph{maximal} relatively amenable. Indeed, if $\bo'<G$ is relatively amenable and contains $\bo$, it also fixes a point in $\bnda G$; this induces an affine $G$-map $\prob(G/\bo') \to  \prob(G/\bo)$, which must be the identity by universality of $\prob(G/\bo)=\bnda G$.

We recall that relative amenability is equivalent to amenability in a wide class of ambient locally compact groups $G$ including all exact groups, but it is only \emph{a posteriori} that the Theorem implies that $G$ is exact, see~\cite[\S7.1]{Kirchberg-Wassermann99}. In the locally compact setting, it is still an open question to exhibit an example where the weaker relative notion does not coincide with amenability~\cite{Caprace-Monod_rel}. In the co-compact case, however, we can settle the question with the Proposition below and conclude that $\bo$ is amenable. Thus the Proposition will complete the proof.

The following statement is a very basic case of much more general results by Andy Zucker~\cite[Thm.~7.5]{Zucker_prep2019}; the elementary proof below is inspired by reading his preprint.

\begin{prop*}
Let $G$ be a Hausdorff topological group and $\bo<G$ a closed subgroup such that $\bnd G = G/\bo$. Then $\bo$ is amenable.
\end{prop*}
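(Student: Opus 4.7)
The plan is to \emph{induce} the affine $\bo$-flow to an affine $G$-flow and exploit that $\bo$ is already relatively amenable in $G$, meaning it fixes a point in every affine $G$-flow. Relative amenability comes essentially for free from the hypothesis: because $\bo$ stabilises $e\bo \in G/\bo = \bnd G$, the Dirac mass $\delta_{e\bo}$ is a $\bo$-fixed point of $\bnda G = \prob(\bnd G)$. Any affine $G$-flow contains an irreducible sub-flow by Zorn's lemma, and the universal property of $\bnda G$ provides a $G$-equivariant affine continuous surjection from $\bnda G$ onto any such irreducible sub-flow, carrying $\delta_{e\bo}$ to a $\bo$-fixed point.

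Given an affine $\bo$-flow $C$, I would form the induced set
\[
\hat C = \bigl\{\phi\colon G \to C \,\bigm|\, \phi(gp) = p\inv \phi(g) \text{ for all } g \in G,\ p\in \bo\bigr\},
\]
on which $G$ acts by left translation $(g\cdot \phi)(h) = \phi(g\inv h)$. This preserves $\hat C$ and its convex structure. Once $\hat C$ is realised as a genuine affine $G$-flow, the first paragraph applied to $\hat C$ yields a $\bo$-fixed point $\phi_0$. This $\phi_0$ then satisfies both the defining right-$\bo$-equivariance, which at $g=e$ reads $\phi_0(p) = p\inv \phi_0(e)$, and the left-$\bo$-invariance $\phi_0(h) = \phi_0(p\inv h)$, which at $h = p$ reads $\phi_0(p) = \phi_0(e)$. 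Combining these gives $\phi_0(e) = p\inv \phi_0(e)$ for every $p\in \bo$, so $\phi_0(e) \in C$ is the desired $\bo$-fixed point.

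The main obstacle is the topological realisation of $\hat C$ as a bona fide, \emph{jointly} continuous affine $G$-flow. With the product topology on $C^G$, the set $\hat C$ is automatically compact, convex, and Hausdorff, and non-emptiness follows from any set-theoretic section $s\colon G/\bo \to G$ via $\phi(g) = (s(g\bo)\inv g)\inv c_0$ for a fixed $c_0 \in C$; but left translation on $C^G$ is far from jointly continuous, failing already in the $g$-variable for discontinuous $\phi$. Restricting instead to continuous $\bo$-equivariant $\phi$ with the compact-open topology gives a genuinely continuous $G$-action on a space which remains compact (since $G/\bo = \bnd G$ is compact), but now non-emptiness becomes the question of continuous sections of $G \to G/\bo$, which need not exist for a general topological group. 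Bridging this tension---presumably by a Zucker-style compactification built from bounded right-uniformly continuous $C$-valued functions on $G$, in the spirit of the $\ru$ algebras used elsewhere---is where the real work lies.
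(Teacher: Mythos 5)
Your first paragraph is correct and consistent with the paper: the Dirac mass at the base point of $G/\bo=\bnd G$ exhibits $\bo$ as relatively amenable, via the universal property of $\bnda G=\prob(\bnd G)$. The algebra of the induction is also fine: a left-$\bo$-fixed point $\phi_0$ of $\hat C$ would indeed yield the $\bo$-fixed point $\phi_0(e)\in C$. But the argument has a genuine gap, and it is exactly the one you flag yourself: $\hat C$ is never realised as an affine $G$-flow. In the product topology the action is not jointly continuous; in the compact-open topology on continuous equivariant maps, non-emptiness requires something like a continuous (local) section of $G\to G/\bo$, which a general Hausdorff topological group need not admit, and contrary to your claim compactness is also not automatic from compactness of $G/\bo$ --- by Ascoli it would require equicontinuity of the whole family of continuous equivariant maps, which fails. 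Since the entire point of the Proposition is to pass from fixed points in $G$-flows (relative amenability) to fixed points in $\bo$-flows (amenability), leaving the induction step as a ``presumably'' leaves the whole content of the statement unproved. A warning sign: your strategy, if completed, would use nothing of the hypothesis $\bnd G=G/\bo$ beyond relative amenability and co-compactness of $\bo$, whereas the gap between relative amenability and amenability is precisely the open question the paper alludes to.

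The paper's proof takes a different route, through means rather than induced flows, and it uses the hypothesis in an essential second way: \emph{strong proximality} of the $G$-action on $\bnd G$. Relative amenability gives a $\bo$-invariant mean $\mu$ on $\ru(G)$; since the restriction map $\ru(G)\to\ru(\bo)$ is onto by Katetov extension, it suffices to show that $\mu$ vanishes on every $f\geq 0$ in $\ru(G)$ that vanishes on $\bo$. Right uniform continuity gives $f\leq\epsi$ on a tube $U\bo$, and Urysohn's lemma on $G/\bo$ produces $h\in\cont(G/\bo)$ vanishing near the base point with $f\leq\epsi\one_G+h$. The restriction of $\mu$ to $\cont(G/\bo)$ is a $\bo$-invariant probability measure on $G/\bo\cong\bnd G$, and strong proximality forces it to be the Dirac mass at $e\bo$; hence $\mu(h)=0$ and $\mu(f)\leq\epsi$. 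This use of the \emph{uniqueness} of the $\bo$-invariant measure on $\bnd G$ is the step for which your proposal has no substitute.
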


\begin{warn*}
A subgroup of $G$ fixing a point in $\bnd G$ is not necessarily amenable. However, in the locally compact case and assuming $\bnd G$ homogeneous, this follows from the Proposition because amenability of locally compact groups passes to subgroups.
\end{warn*}



\begin{proof}[Proof of the Proposition]
We know that $\bo$ is co-compact and relatively amenable. The latter is equivalent to the existence of a $\bo$-invariant mean $\mu$ on the space $\ru(G)$ of right uniformly continuous bounded functions (cf.\ Thm.~5 in~\cite{Caprace-Monod_rel}). It suffices to show that $\mu$ descends to $\ru(\bo)$, viewed as a quotient of $\ru(G)$ under restriction (by Katetov extension~\cite{Katetov51}). Let thus $f\in \ru(G)$ be any map vanishing on $\bo$; we need to show $\mu(f)=0$ and can assume $f\geq 0$. Given $\epsi>0$ there is an identity neighbourhood $U$ in $G$ such that $f\leq \epsi$ on $U \bo$. By Urysohn's lemma in $G/\bo$, there is $h\in \cont(G/\bo)$ vanishing on a neighbourhood of $\bo$ but taking constant value $\|f\|_\infty$ outside $U \bo$. Viewing $h$ as an element of $\ru(G)$, we thus have $f \leq \epsi \one_G + h$. We now claim $\mu(h)=0$, which finishes the proof since $\epsi$ is arbitrary. The claim follows from the fact that $\mu$ is mapped to a $\bo$-invariant probability measure on $G/\bo$ under the inclusion of $\cont(G/\bo)$ in $\ru(G)$. Indeed, the only $\bo$-invariant probability measure on $G/\bo \cong \bnd G$ is the Dirac mass at $\bo$ by strong proximality of $P$ on $\bnd G$, see~\cite[II.3.1]{Glasner_LNM_short}.
\end{proof}

\begin{proof}[Proof of Corollary~\ref{cor:NPC}]
Consider $G<\Isom(X)$ as in the statement. We first recall that $X$ is \emph{minimal} in the sense that it does not contain a closed convex $G$-invariant proper subset, see~\cite[3.13]{Caprace-Monod_structure}. Next, we recall that general splitting results (1.9 together with~1.5(iii) in~\cite{Caprace-Monod_structure}) allow us to reduce to the case where $X$ has no Euclidean factor. In any Gelfand pair, $G$ is unimodular~\cite[24.8.1]{Simonnet_short}; this, together with the elements collected thus far, allows us to apply Theorem~M in~\cite{Caprace-Monod_amenis_MA}. That result states that $G$ has no fixed point at infinity. On the other hand, our Theorem above provides a subgroup $\bo<\Isom(X)$ acting co-compactly on $X$. We are now in position to apply the indiscrete Bieberbach theorem~\cite[Thm.~B]{Caprace-Monod_bib}, which identifies $X$ with a product of classical spaces as desired.
\end{proof}

We now justify our claims concerning the functions $\fhi_s$ on $G$. Since $G$ is unimodular (reference above), Weil's integration formula~\cite[VII\S2.5]{NBourbakiINT78_bis} implies that the push-forward of $\kappa$ on $G/\bo$ has a Radon--Nikod\'ym cocycle given at $(g, x \bo)$ by $\ro(g\inv x) /\ro(x)$. Therefore, the unitary induction $\pi_s$ of the character $\Mod_\bo^{i s}$ is given on various spaces of functions $f$ on $G/\bo$ by
\begin{equation*}
(\pi_s(g) f) (x\bo) = f(g\inv x\bo) \left(\frac{\ro(g\inv x)}{\ro(x)}\right) ^{\frac12 + i s} .
\end{equation*}
The only $K$-invariant vectors $v$ are constant functions on $G/\bo$ and hence the associated matrix coefficient $\fhi_s(g) = \langle \pi_s(g) v, v\rangle$ is uniquely defined once $v$ has unit norm. The fact that $\fhi_s$ is pure and spherical (for $s\in\RR$) now follows from the general theory of Gelfand pairs, specifically~I.II.6 and~I.III.2 in~\cite{Faraut83}.

\begin{rem*}
A part of the proof of the Theorem is reminiscent of the fact that any irreducible \emph{unitary} representation of $G$ has at most a one-dimensional subspace of $K$-fixed vectors, a fact that actually characterises Gelfand pairs. We recall that the corresponding statement fails for \emph{real} Hilbert spaces, whereas our affine flows are always over the reals.
\end{rem*}

\textbf{Acknowledgements.}
I am grateful to Andy Zucker for sending me his preprint and to Pierre-Emmanuel Caprace for several insightful comments on a preliminary version.





\bibepigraph{\emph{Selberg ne fait aucune esp{\`e}ce d'allusion {\`a} l'existence possible d'une litt{\'e}rature math{\'e}matique.}\\ --- R. Godement~\cite{Godement57}, 1957.}
\bibliographystyle{abbrv}
\bibliography{../BIB/ma_bib}

\end{document}